\DeclareFontShape{T1}{lmr}{b}{sc}{<->ssub*cmr/bx/sc}{}
\DeclareFontShape{T1}{lmr}{bx}{sc}{<->ssub*cmr/bx/sc}{}
\newcommand{\btk}{\begin{tikzcd}}
\newcommand{\etk}{\end{tikzcd}}
\newcommand{\@bbify}[1]{
  \ifcsname b#1\endcsname
  \message{WARNING: Overwriting b#1 with blackboard letter!}
  \fi
  \expandafter\edef\csname b#1\endcsname
  {\noexpand\ensuremath{\noexpand\mathbb #1}\noexpand\xspace}}
\newcommand{\@calify}[1]{
  \ifcsname c#1\endcsname
  \message{WARNING: Overwriting c#1 with calligraphic letter!}
  \fi 
  \expandafter\edef\csname c#1\endcsname
  {\noexpand\ensuremath{\noexpand\mathcal #1}\noexpand\xspace}}
\newcommand{\@bfify}[1]{
  \ifcsname bf#1\endcsname
  \message{WARNING: Overwriting c#1 with bold letter!}
  \fi
  \expandafter\edef\csname bf#1\endcsname
  {\noexpand\ensuremath{\noexpand\mathbf #1}\noexpand\xspace}}
\newcounter{@letter}\stepcounter{@letter}
\loop\@bbify{\Alph{@letter}}\@calify{\Alph{@letter}}\@bfify{\Alph{@letter}}
\newenvironment{tz}{\begin{center}\begin{tikzpicture}}{\end{tikzpicture}\end{center}}
\tikzstyle{d}=[double distance=.3ex]
\tikzstyle{w}=[preaction={draw=white,-,line width=5pt}]
\tikzset{%
node distance=1.5cm, la/.style={scale=0.8}, lasmall/.style={scale=0.75}, over/.style={auto=false,fill=white,inner sep=1.5pt, minimum size=0, outer sep=0},
    symbol/.style={%
        draw=none,
        every to/.append style={%
            edge node={node [sloped, allow upside down, auto=false]{$#1$}}},
            
    }, pro/.style={postaction={decorate,decoration={
        markings,
        mark=at position .5 with {\node at (0,0) {$\bullet$};}
      }},
      inner sep=.9ex,
      },
      prosmall/.style={postaction={decorate,decoration={
        markings,
        mark=at position .5 with {\node at (0,0) {$\scriptstyle \bullet$};}
      }},
      inner sep=.9ex,
      },
  n/.style={double equal sign distance, -implies}, t/.style={double distance=2.5pt, -implies, postaction={draw,-}},
}
\newcommand{\Cof}{\mathrm{Cof}}
\newcommand{\cof}{\ensuremath{\mathrm{cof}}}
\newcommand{\fib}{\ensuremath{\mathrm{Fib}}}
\DeclareMathOperator{\Sing}{Sing}
\DeclareMathOperator{\Map}{Map}
\newcommand{\Path}{\mathrm{Path}}
\newcommand{\C}{\mathcal{C}}
\newcommand{\J}{\mathcal{J}}
\newcommand{\K}{\mathcal{K}}
\newcommand{\W}{\mathcal{W}}
\newcommand{\symsp}{\mathrm{Sp}_{\mathrm{st}}^\Sigma}
\newcommand{\seqsp}{\mathrm{Sp}_{\mathrm{st}}^{\mathbb{N}}}
\newlist{rome}{enumerate}{7}
\setlist[rome]{label=(\roman*)}
\newtheorem{theorem}{Theorem}[section]
\theoremstyle{definition}
\theoremstyle{remark}
\newtheorem{rem}[theorem]{Remark}
\crefname{theorem}{Theorem}{Theorems}
\crefname{cor}{Corollary}{Corollaries}
\crefname{prop}{Proposition}{Propositions}
\crefname{lem}{Lemma}{Lemmas}
\crefname{defn}{Definition}{Definitions}
\crefname{terminology}{Terminology}{Terminologies}
\crefname{ex}{Example}{Examples}
\crefname{notation}{Notation}{Notations}
\crefname{descr}{Description}{Descriptions}
\crefname{constr}{Construction}{Constructions}
\crefname{rem}{Remark}{Remarks}
\renewcommand\thepart{\Roman{part}.}
\renewcommand\part{%
  \par
  \addvspace{4ex}%
  \@afterindenttrue
  \secdef\@part\@spart
}
\def\@part[#1]#2{%
    \ifnum \c@secnumdepth >\m@ne
      \refstepcounter{part}%
      
      \addcontentsline{toc}{section}{\hspace{-.5cm} \bfseries\thepart\hspace{1em}#1}%
    \else
      \addcontentsline{toc}{section}{#1}%
    \fi
    {\parindent \z@ \raggedright
     \interlinepenalty \@M
     \normalfont
     \thispagestyle{empty}
     \ifnum \c@secnumdepth >\m@ne
      \centering\large\textsc{\textbf{\thepart}}\nobreakspace
     \fi
     \centering\large\textsc{\textbf{#2}}
     \par}%
    \nobreak
    \vskip .3cm
    \@afterheading}
\def\@spart#1{%
      \addcontentsline{toc}{part}{#1}%
    {\parindent \z@ \raggedright
     \interlinepenalty \@M
     \normalfont
     \thispagestyle{plain}
     \centering\large\textsc{\textbf{#1}}
     \par}%
    \nobreak
    \vskip .3cm
    \@afterheading}
\title{A concise proof of the stable model structure on symmetric spectra}
\author[C.\ Malkiewich]{Cary Malkiewich}
\address{Department of Mathematics,
Binghamton University,
Binghamton, NY 13902, USA}
\email{malkiewich@math.binghamton.edu}
\author[M.\ Sarazola]{Maru Sarazola}
\address{School of Mathematics, University of Minnesota, Minneapolis MN, 55415, USA}
\email{maru@umn.edu}
\begin{document}

\maketitle

\begin{abstract}
It is well-known that the stable model structure on symmetric spectra cannot be transferred from the one on sequential spectra through the forgetful functor. We use the fibrant transfer theorem of \cite{fibrantMS} to show it can be transferred between fibrant objects, providing a new, short and conceptual proof of its existence.
\end{abstract}

\section{Introduction}

Symmetric spectra provide one of the most general and flexible frameworks for stable homotopy theory, as a symmetric monoidal model category. There are other model categories with good smash products, including orthogonal spectra from \cite{MMSS} and the $S$-modules of \cite{EKMM}, but symmetric spectra are more general than these, because they only use symmetric groups in their definition. This allows us to define them in settings where a topological enrichment is not naturally present, such as the motivic setting \cite{hovey_general}.

However, the theory of symmetric spectra has a famous drawback: the stable equivalences cannot be defined as the maps that induce isomorphisms on the homotopy groups (the $\pi_*$-isomorphisms), at least if we give the homotopy groups their most obvious definition. As a result, the proof of the stable model structure for symmetric spectra in \cite{HSS} and \cite{MMSS} requires additional concepts and intermediate results beyond those needed to establish the same model structure for sequential spectra (prespectra) or orthogonal spectra. This makes the theory somewhat less ``user friendly'' and presents a significant tradeoff compared to the other models.

A natural attempt to avoid these complications is to use the transfer theorem (see for instance \cite[Section 3]{crans} or the dual of \cite[Theorem 2.2.1]{HKRS}) to produce the model structure from the adjunction relating symmetric spectra to sequential spectra,
\begin{tz}
\node[](1) {$\mathrm{Sp}^\Sigma$}; 
\node[right of=1,xshift=1cm](2) {$\seqsp$.}; 

\draw[->] ($(1.east)-(0,5pt)$) to node[below,la]{$U$} ($(2.west)-(0,5pt)$);
\draw[->] ($(2.west)+(0,5pt)$) to node[above,la]{$F$} ($(1.east)+(0,5pt)$);

\node[la] at ($(1.east)!0.5!(2.west)$) {$\bot$};
\end{tz}
If successful, this would construct a model structure on $\mathrm{Sp}^\Sigma$ in which a map $f$ is a weak equivalence if and only if $Uf$ is a $\pi_*$-isomorphism in $\seqsp$. Sadly, this approach cannot succeed, because it produces the wrong class of weak equivalences: we would get the maps inducing isomorphisms on homotopy groups, rather than the maps inducing isomorphisms on generalized cohomology. These weak equivalences are ``wrong'' because with them the adjunction $(F,U)$ is not a Quillen equivalence, and so we get the wrong homotopy category on the left-hand side.

Upon inspection, we see that the discrepancy between the classes of weak equivalences vanishes once we restrict to maps between fibrant objects. This suggests that one could use a more flexible version of the transfer theorem introduced recently in \cite{fibrantMS}, where weak equivalences are transferred through the right adjoint only between fibrant objects, leaving room for equivalences between arbitrary objects to admit a more complicated description. 

The goal of this note is to prove that this approach works, providing a short proof of the existence of the stable model structure on $\symsp$. Notably, it allows us to completely avoid a treatment---and even a definition of---stable equivalences between non-fibrant objects, and only rely on well-known facts about $\pi_*$-isomorphisms in $\seqsp$.

\section{Review of model structures on spectra}

In what follows, we let $\mathrm{Sp}^\mathbb{N}$ denote the category of sequential spectra based on simplicial sets. These are sequences of pointed simplicial sets $X_n$ with bonding maps $X_n \wedge S^1 \to X_n$, where $S^1$ denotes the simplicial circle. We similarly let $\mathrm{Sp}^\Sigma$ denote the corresponding category of symmetric spectra, in which the levels $X_n$ have actions by the symmetric group, and the bonding maps respect these actions in the sense of \cite[Def 1.2.2]{HSS}.

We briefly recall some of the model structures that can be defined on these categories. The first two are ``level'' model structures, in which the weak equivalences are maps that are equivalences at each spectrum level separately.

\begin{theorem}\cite[Proposition 2.2]{BF}
There is a \emph{level model structure} on sequential spectra $\mathrm{Sp}_l^\mathbb{N}$, in which weak equivalences, fibrations, and trivial fibrations are defined as the levelwise corresponding maps on $\mathrm{sSet}_{*,\mathrm{Quillen}}$.
\end{theorem}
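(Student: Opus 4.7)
The plan is to apply the transfer (recognition) theorem for cofibrantly generated model structures to the free--forgetful adjunction
\[
F : \prod_{n \geq 0} \mathrm{sSet}_* \;\rightleftarrows\; \mathrm{Sp}^\mathbb{N} : U,
\]
where $U$ records the underlying sequence of pointed simplicial sets (forgetting the bonding maps) and $F$ is its left adjoint. Concretely, on a sequence supported at level $n$, $F$ yields the ``free spectrum'' $F_n(A)$ whose $m$-th level is $A \wedge S^{m-n}$ for $m \geq n$ and the basepoint otherwise, with bonding maps given by the canonical identifications after shift. Equipping the left-hand side with the product of copies of $\mathrm{sSet}_{*,\mathrm{Quillen}}$, the level weak equivalences and level fibrations of $\mathrm{Sp}^\mathbb{N}$ are by definition created by $U$, so it remains only to verify the two hypotheses of the transfer theorem.

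The smallness hypothesis is automatic: $\mathrm{Sp}^\mathbb{N}$ is locally presentable, so every object is small with respect to the proposed generating sets $\bigcup_n F_n(I_{\mathrm{sSet}_*})$ and $\bigcup_n F_n(J_{\mathrm{sSet}_*})$. The substantive hypothesis is that any relative cell complex built from $\bigcup_n F_n(J_{\mathrm{sSet}_*})$ is a level weak equivalence. For this, observe that at each level $m \geq n$ the functor $F_n$ acts as $-\wedge S^{m-n}$, which preserves trivial cofibrations by the monoidal structure of $\mathrm{sSet}_{*,\mathrm{Quillen}}$. Hence $F_n$ sends any generating trivial cofibration in $\mathrm{sSet}_*$ to a map that is a trivial cofibration at every level of $\mathrm{Sp}^\mathbb{N}$.

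Since colimits in $\mathrm{Sp}^\mathbb{N}$ are computed levelwise (the forgetful functor to sequences preserves and reflects colimits, and the bonding maps of a colimit are assembled levelwise), pushouts and transfinite compositions of levelwise trivial cofibrations are themselves levelwise trivial cofibrations, and in particular level weak equivalences. The only bookkeeping obstacle is correctly identifying $F$ in the presence of the bonding-map constraint and confirming that colimits are computed levelwise, both of which are standard once one notes that $\mathrm{Sp}^\mathbb{N}$ is the category of modules over the sequential sphere in the symmetric monoidal category of sequences. The transfer theorem then produces the desired cofibrantly generated model structure $\mathrm{Sp}_l^\mathbb{N}$ with generating cofibrations $\bigcup_n F_n(I_{\mathrm{sSet}_*})$, generating trivial cofibrations $\bigcup_n F_n(J_{\mathrm{sSet}_*})$, and weak equivalences, fibrations, and trivial fibrations defined levelwise, as claimed.
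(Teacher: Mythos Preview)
The paper does not give its own proof of this statement; it simply cites \cite[Proposition~2.2]{BF}. Your argument via the classical right-transfer theorem along the free--forgetful adjunction to graded pointed simplicial sets is correct and is the standard modern approach (as in, e.g., MMSS or Hovey's general spectra paper): the identification of $F_n$, the levelwise computation of colimits, and the closure of level trivial cofibrations under cell attachments are all accurate. It is perhaps worth noting that Bousfield and Friedlander's original 1978 argument predates the transfer theorem in its current packaging, so they verify the model-category axioms by hand; your route is cleaner but relies on machinery not available to them.
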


\begin{theorem}\cite[Theorem 5.1.2]{HSS}\label{thm:stableMSsymsp}
There is a \emph{level model structure} on symmetric spectra $\mathrm{Sp}_l^\Sigma$, in which weak equivalences, fibrations, and trivial fibrations are defined as the levelwise corresponding maps on $\mathrm{sSet}_{*,\mathrm{Quillen}}$.
\end{theorem}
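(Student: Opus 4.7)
The plan is to obtain this level model structure by applying Kan's transfer theorem along the free/forgetful adjunction
\[
F : \mathrm{Sp}^{\mathbb{N}} \rightleftarrows \mathrm{Sp}^\Sigma : U,
\]
starting from the level model structure on $\mathrm{Sp}^{\mathbb{N}}$ of the previous theorem. Concretely, I would take as generating (trivial) cofibrations the images under $F$ of the generating (trivial) cofibrations of $\mathrm{Sp}_l^{\mathbb{N}}$, which in turn are of the form $F_n^{\mathbb{N}} i$ and $F_n^{\mathbb{N}} j$ for $i \in I$ and $j \in J$ the generators of $\mathrm{sSet}_{*,\mathrm{Quillen}}$. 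Writing $F^\Sigma_n := F \circ F_n^{\mathbb{N}}$ for the free symmetric spectrum functor at level $n$, the proposed generating sets are $\{F^\Sigma_n i\}$ and $\{F^\Sigma_n j\}$.

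The verification of Kan's criteria proceeds in two steps. First, smallness of the domains is immediate, since $\mathrm{Sp}^\Sigma$ is locally presentable (being a category of presheaves with extra structure on a small category) and all generators have compact simplicial pieces. Second, one must check acyclicity: every relative $F^\Sigma J$-cell complex is a levelwise weak equivalence. For this I would use the explicit description
\[
(F^\Sigma_n K)_k \;=\; \bigvee_{\Sigma_k / \Sigma_{k-n}} S^{k-n} \wedge K \qquad (k \geq n),
\]
with the $\Sigma_k$-action permuting the wedge summands freely. Because the $\Sigma_{k-n}$-action is by permutation of a wedge, pushing out a map of the form $F^\Sigma_n j$ along any map in $\mathrm{Sp}^\Sigma$ reduces on each level $k$ to a pushout along a coproduct of copies of $S^{k-n} \wedge j$, which is a trivial cofibration in pointed simplicial sets. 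Pushouts and transfinite composites of trivial cofibrations in $\mathrm{sSet}_{*,\mathrm{Quillen}}$ remain trivial cofibrations, and in particular are weak equivalences, yielding the acyclicity condition.

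The main obstacle is precisely this levelwise analysis of pushouts of free maps. The reason it succeeds with no further technical input is that $F^\Sigma_n$ produces a spectrum that is \emph{free} on the $\Sigma_n$-action at level $n$, so the relevant quotients $\Sigma_k/\Sigma_{k-n}$ act by permuting coproduct summands rather than by identifications within a single simplicial set. This contrasts sharply with the stable situation, where subtler constructions involving shifts and semistability are needed; here, the transfer is formal once this combinatorial fact is recorded. Putting these pieces together, Kan's theorem produces the desired model structure on $\mathrm{Sp}^\Sigma$, with weak equivalences and fibrations those maps $f$ whose image $Uf$ is a levelwise weak equivalence, respectively levelwise Kan fibration, of pointed simplicial sets, matching the claimed description.
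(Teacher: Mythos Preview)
Your proposal is correct and matches the approach the paper itself endorses: the paper does not give a detailed proof of this cited result, but it explicitly remarks that the free--forgetful adjunction ``can be used to transfer the level model structure on sequential spectra to obtain the level model structure on symmetric spectra,'' which is exactly the argument you carry out via Kan's transfer theorem. Your levelwise analysis of $F_n^\Sigma K$ and the acyclicity check are the standard verification, and they go through because $U$ preserves colimits (both categories have levelwise colimits) and each $U(F_n^\Sigma j)$ is a levelwise trivial cofibration of pointed simplicial sets.
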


Of course, the levelwise equivalences are not the equivalences we want in stable homotopy theory---we want a broader class of maps called stable equivalences. For sequential spectra, these are the same thing as $\pi_*$-isomorphisms.

\begin{theorem}\cite[Proposition 2.3]{BF}
There is a \emph{stable model structure} on sequential spectra $\seqsp$, in which:
\begin{itemize}
\item weak equivalences are the $\pi_*$-isomorphisms; i.e. the maps inducing isomorphisms on all homotopy groups,
\item fibrant objects are the $\Omega$-spectra ($X_n \to \Omega X_{n+1}$ is an equivalence) which are levelwise Kan complexes, and
\item trivial fibrations are the level trivial fibrations.
\end{itemize}
\end{theorem}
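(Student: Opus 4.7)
The plan is to apply Bousfield and Friedlander's localization framework to the level model structure from the previous theorem. First, I would construct a ``stabilization'' endofunctor $Q$ on $\seqsp$ together with a natural transformation $\iota \colon \id \Rightarrow Q$. Concretely, after taking a levelwise fibrant replacement $X \to X^f$, one sets $(QX)_n = \colim_k \Omega^k (X^f)_{n+k}$, with bonding maps induced from the structure maps of $X$. This produces an $\Omega$-spectrum whose naive homotopy groups agree with those of $X$.

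Next, I would declare a map $f$ to be a stable equivalence precisely when $Q(f)$ is a level equivalence, and check that this coincides with the class of $\pi_*$-isomorphisms by using the identification $\pi_n(X) = \colim_k \pi_{n+k}((X^f)_k)$. The axioms that must then be verified, in order to apply the Bousfield-Friedlander localization theorem, are: (i) $Q$ preserves level equivalences; (ii) the two maps $QX \rightrightarrows QQX$ induced by $\iota$ are level equivalences; and (iii) the class of stable equivalences is closed under pullback along level fibrations between level-fibrant objects. Item (iii) is the main obstacle: it reduces to the standard fact that a fibration of pointed Kan complexes produces a long exact sequence of homotopy groups, which, combined with exactness of filtered colimits, yields long exact sequences of $\pi_*$-groups for fibre sequences in $\seqsp$, and in turn implies the required pullback stability.

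Once these axioms are verified, Bousfield-Friedlander's theorem produces the stable model structure, with $\pi_*$-isomorphisms as weak equivalences and with level cofibrations as cofibrations. The fibrant objects are characterized as those $X$ for which $\iota_X \colon X \to QX$ is a level equivalence; unwinding this condition together with levelwise fibrancy gives precisely the stated description as levelwise Kan $\Omega$-spectra. Finally, since every level equivalence is automatically a $\pi_*$-isomorphism, and the cofibrations are unchanged from the level structure, the trivial fibrations in the stable structure coincide with the level trivial fibrations, as claimed.
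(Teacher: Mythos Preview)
The paper does not give its own proof of this theorem; it is simply quoted from \cite[Proposition~2.3]{BF} as background and used as an input to the later fibrant-transfer argument. So there is nothing in the paper to compare your proposal against.

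That said, your sketch is precisely the strategy of the cited source: Bousfield and Friedlander build the stabilization functor $Q$, verify the axioms (A.4)--(A.6) of their $Q$-model-structure theorem (your (i)--(iii)), and read off the description of fibrant objects and trivial fibrations. One small point to be careful with is your phrasing of (iii): the Bousfield--Friedlander condition (A.6) asks that pullbacks of $Q$-equivalences along $Q$-fibrations between $Q$-fibrant objects remain $Q$-equivalences, not merely along level fibrations between level-fibrant objects; you should state and check it for the correct class of maps, though the underlying input (long exact sequences in $\pi_*$ for level fibrations, plus exactness of filtered colimits) is as you describe. With that adjustment, your outline matches \cite{BF} and is correct.
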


\begin{theorem}\cite[Theorem 3.4.4]{HSS}\label{thm:stableMSsymsp}
There is a \emph{stable model structure} on symmetric spectra $\symsp$, in which:
\begin{itemize}
\item weak equivalences are the stable equivalences; i.e. the maps $f\colon X\to Y$ such that $f^*\colon \Map(X,E)\to\Map(Y,E)$ is a weak equivalence of simplicial sets for every injective $\Omega$-spectrum $E$, and
\item fibrant objects and trivial fibrations agree with those of $\seqsp$.
\end{itemize}
\end{theorem}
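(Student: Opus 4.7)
The plan is to apply the fibrant transfer theorem of \cite{fibrantMS} to the adjunction $F \dashv U \colon \seqsp \rightleftarrows \symsp$, taking the level model structure on $\symsp$ as the background structure and transferring weak equivalences from the stable model structure on $\seqsp$ along $U$. The obstruction to the ordinary transfer theorem is that the class of maps $f$ with $Uf$ a $\pi_*$-isomorphism does not yield the correct homotopy category; the fibrant transfer theorem bypasses this by only demanding such a characterization between fibrant objects, where we know it agrees with the stable equivalences of \cite{HSS}.

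The first verifications are structural: $U$ is right Quillen for the two level model structures, which is immediate from the levelwise definitions, and it sends level-Kan $\Omega$-spectra to level-Kan $\Omega$-spectra, so fibrant objects behave correctly across the adjunction. The substantive condition of \cite{fibrantMS} is an acyclicity-type hypothesis which, roughly, requires pushouts of level trivial cofibrations along maps into fibrant symmetric spectra to become $\pi_*$-isomorphisms after applying $U$, together with an analogous condition on path objects. This should reduce to standard closure properties of $\pi_*$-isomorphisms in $\seqsp$ combined with the fact that $U$ commutes with the relevant colimits and preserves the relevant factorizations.

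The main obstacle, then, is translating these conditions faithfully into the framework of \cite{fibrantMS} and confirming that its combinatorial hypotheses apply to our specific setting. The crucial virtue of the approach is that we never need to describe stable equivalences between non-fibrant symmetric spectra, nor to invoke the theory of injective $\Omega$-spectra or semistability that underlies \cite{HSS}; the theorem of \cite{fibrantMS} constructs the full model structure from the fibrant data alone, and all nontrivial homotopical input comes from $\seqsp$.

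Once the model structure exists, I would identify it with the one in the statement. The cofibrations and trivial fibrations coincide with those of the level structure on $\symsp$ by construction, so the trivial fibrations agree with those of $\seqsp$ under $U$ as claimed. The fibrant objects, by the transfer theorem, are those $X$ whose $U$-image is an $\Omega$-spectrum of Kan complexes, matching the description. Finally, to identify the transferred class of weak equivalences with the stable equivalences of \cite{HSS}, one notes that both classes coincide with the maps that become $\pi_*$-isomorphisms after fibrant replacement, which closes the argument.
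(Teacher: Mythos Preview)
Your overall plan matches the paper's: apply the fibrant transfer theorem of \cite{fibrantMS} to the adjunction $F \dashv U$ with $\cM = \seqsp$. However, your description of the substantive hypothesis is not accurate. Condition (1) of \cref{thm:transfer} does not ask that pushouts of level trivial cofibrations into fibrant objects become $\pi_*$-isomorphisms; it asks that any map $f$ with $Uf$ a trivial fibration in $\seqsp$ can be completed to a commuting square whose vertical maps lie in $\cof(F\K)$, whose bottom row has $U$-fibrant source and target, and whose bottom map is sent by $U$ to a weak equivalence. The paper verifies this by factoring $X \to 1$ through an $F\K$-cofibration $X \to X'$ with $UX'$ fibrant, then pushing out $f$ (not a trivial cofibration) along $X \to X'$; the resulting map $g\colon X' \to X' \cup_X Y$ is a level equivalence because it is levelwise the pushout of a weak equivalence along a monomorphism of simplicial sets. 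A further level-fibrant replacement of the pushout corner, which lies in $\cof(F\J)\subseteq\cof(F\K)$, guarantees the target is levelwise Kan. So a pushout does appear, but with the roles reversed from what you sketched, and the argument takes place entirely at the level of simplicial sets rather than via closure properties of $\pi_*$-isomorphisms.

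For the identification with the model structure of \cite{HSS}, the paper takes a cleaner route than yours. Rather than arguing that both classes of weak equivalences coincide with maps that become $\pi_*$-isomorphisms after fibrant replacement---which would require importing facts about the HSS stable equivalences that the paper is expressly trying to avoid---it simply invokes the fact that a model structure is uniquely determined by its class of fibrant objects together with its trivial fibrations \cite[Theorem 51.10]{Joyal}. Since the transferred structure and the HSS structure share both of these, they coincide, and the description of the weak equivalences as the HSS stable equivalences comes for free. This keeps the argument entirely free of any analysis of stable equivalences between non-fibrant symmetric spectra.
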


All of these model structures are cofibrantly generated.

\begin{rem}
The definition of the stable equivalences for symmetric spectra requires the notion of an \emph{injective} spectrum (\cite[Definition 3.1.1]{HSS}), which is not recalled here. This is an intentional omission: the injectivity condition can be dropped when working between fibrant objects, which is all that is needed in our proof.
\end{rem}

\begin{rem}\label{rem:top}
Once the stable model structures are constructed, one can use the adjunction between spectra of topological spaces and spectra of simplicial sets
\begin{tz}
\node[](1) {$\mathrm{Sp}_\mathrm{Top}$}; 
\node[right of=1,xshift=1cm](2) {$\mathrm{Sp}.$}; 

\draw[->] ($(1.east)-(0,5pt)$) to node[below,la]{$\Sing$} ($(2.west)-(0,5pt)$);
\draw[->] ($(2.west)+(0,5pt)$) to node[above,la]{$\vert - \vert$} ($(1.east)+(0,5pt)$);

\node[la] at ($(1.east)!0.5!(2.west)$) {$\bot$};
\end{tz}
By the classical transfer theorem, we get stable model structures on the categories of spectra based on topological spaces, $\mathrm{Sp}_\mathrm{Top}^\mathbb{N}$ and $\mathrm{Sp}_\mathrm{Top}^\Sigma$. As expected, each of these $\mathrm{Top}$-based model structures is Quillen equivalent to its simplicial counterpart.
\end{rem}

Sequential and symmetric spectra are related by a free-forgetful adjunction \cite[Thm 4.3.2]{HSS}
\begin{tz}
\node[](1) {$\mathrm{Sp}^\Sigma$}; 
\node[right of=1,xshift=1cm](2) {$\mathrm{Sp}^\mathbb{N}$}; 

\draw[->] ($(1.east)-(0,5pt)$) to node[below,la]{$U$} ($(2.west)-(0,5pt)$);
\draw[->] ($(2.west)+(0,5pt)$) to node[above,la]{$F$} ($(1.east)+(0,5pt)$);

\node[la] at ($(1.east)!0.5!(2.west)$) {$\bot$};
\end{tz}
which can be used to transfer the level model structure on sequential spectra to obtain the level model structure on symmetric spectra. A natural question is then: is it possible to transfer the stable model structure through the free-forgetful adjunction as well? Unfortunately this strategy is not fruitful.

\begin{rem}\label{remark}
As observed by Hovey--Shipley--Smith in \cite[Section 3.1]{HSS}, the model structure on $\symsp$ cannot be transferred through the free-forgetful adjunction \begin{tz}
\node[](1) {$\mathrm{Sp}^\Sigma$}; 
\node[right of=1,xshift=1cm](2) {$\seqsp$}; 

\draw[->] ($(1.east)-(0,5pt)$) to node[below,la]{$U$} ($(2.west)-(0,5pt)$);
\draw[->] ($(2.west)+(0,5pt)$) to node[above,la]{$F$} ($(1.east)+(0,5pt)$);

\node[la] at ($(1.east)!0.5!(2.west)$) {$\bot$};
\end{tz}
Indeed, the functor $U$ does not preserve weak equivalences, as illustrated in \cite[Example 3.1.10]{HSS}. However, if we work only between fibrant objects, we find that a map $f$ is a stable equivalence if and only if $Uf$ is \cite[Corollary 4.3.4]{HSS}.
\end{rem}

\section{Existence of the model structure}

\cref{remark} suggests that the model structure on $\symsp$ can be constructed using the notion of fibrant transfer introduced in \cite{fibrantMS}. In this section we recall the statement of this fibrant transfer theorem, then use it to construct the model structure on $\symsp$.

\begin{theorem}\cite[Theorem 3.5]{fibrantMS}\label{thm:transfer}
Let $(\cM,\Cof,\fib,\W)$ be a combinatorial model category with generating set of trivial cofibrations $\K$, and let $\C$ be a locally presentable category. Suppose that we have an adjunction 
\begin{tz}
\node[](1) {$\C$}; 
\node[right of=1,xshift=1cm](2) {$\cM$}; 

\draw[->] ($(1.east)-(0,5pt)$) to node[below,la]{$U$} ($(2.west)-(0,5pt)$);
\draw[->] ($(2.west)+(0,5pt)$) to node[above,la]{$F$} ($(1.east)+(0,5pt)$);

\node[la] at ($(1.east)!0.5!(2.west)$) {$\bot$};
\end{tz}
and that the following properties are satisfied:
\begin{enumerate}[label=(\arabic*)]
    \item\label{srind2} for every map $f\colon X\to Y$ such that $Uf\in\fib\cap\W$, there is a commutative square
\begin{tz}
\node[](1) {$X$}; 
\node[right of=1](2) {$Y$}; 
\node[below of=1](1') {$X'$}; 
\node[below of=2](2') {$Y'$}; 

\draw[->] (1) to node[above,la]{$f$} (2); 
\draw[->] (1) to node[left,la]{$\iota_X$} (1'); 
\draw[->] (2) to node[right,la]{$\iota_Y$} (2'); 
\draw[->] (1') to node[below,la]{$f'$} (2');
\end{tz}
 with $UX',UY'$ fibrant, $\iota_X,\iota_Y\in \cof(F\K)$, and such that $Uf'\in\W$; 
    \item\label{srind1} for every object $X$ such that $UX$ is fibrant, there is a factorization of the diagonal morphism 
    \[ X\xrightarrow{w} \Path X\xrightarrow{p} X\times X \]
    with $Uw\in \W$ and $Up\in\fib$.
\end{enumerate}

Then, there exists a combinatorial model structure on $\C$ in which a morphism $f$ is a trivial fibration if and only if $Uf$ is so in $\cM$, and an object $X$ is fibrant if and only if $UX$ is fibrant in $\cM$. Moreover, a morphism $f$ between fibrant objects in $\C$ is a weak equivalence (resp.\ fibration) if and only if $Uf$ is so in $\cM$.
\end{theorem}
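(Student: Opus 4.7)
The plan is to apply the Kan--Quillen recognition theorem for cofibrantly generated model categories to $\C$, with $F\IM$ and $F\K$ as generating cofibrations and trivial cofibrations. The fibrations and trivial fibrations should then be the maps $f$ with $Uf\in\fibM$ and $Uf\in\trivfibM$, respectively, which by adjunction are precisely the maps with right lifting property against $F\K$ and $F\IM$. Local presentability of $\C$ makes the small object argument applicable, producing the required functorial factorizations into a map in $\cof(F\IM)$ followed by a trivial fibration, and a map in $\cof(F\K)$ followed by a fibration.

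The delicate step is defining weak equivalences, since $U$ neither preserves nor reflects them in general. Applying the second factorization to $X\to *$ yields a functorial fibrant replacement $R\colon\C\to\C$ together with natural $F\K$-cellular maps $X\to RX$. I would then declare $f$ to be a weak equivalence in $\C$ iff $URf\in\weM$. Two-out-of-three and closure under retracts are then immediate from the corresponding properties in $\cM$ and the functoriality of $R$. It remains to prove (a) every map in $\cof(F\K)$ is a weak equivalence, and (b) every map $f$ with $Uf\in\trivfibM$ is a weak equivalence. Combining these with the standard retract argument yields the remaining lifting and factorization axioms.

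Hypothesis \ref{srind2} is tailored to establish (b): for such an $f$, we obtain a square of $F\K$-cellular fibrant replacements whose bottom map has $U$-image in $\weM$, and combining this with (a) for the vertical maps via two-out-of-three gives $URf\in\weM$. Hypothesis \ref{srind1} is the key to (a): for $i\in\cof(F\K)$, the path object at $RY$ permits a Quillen--Brown style argument producing a homotopy inverse to $Ri$ after applying $U$, which then yields $URi\in\weM$. The main obstacle I expect is the apparent circularity between (a) and (b): the argument sketched for (b) uses (a) for the vertical cellular maps, so in practice one must first establish (a) in restricted form, e.g.\ for cellular maps between already-fibrant objects via the path-object argument alone, and then bootstrap to the general case, deploying the two hypotheses at precisely the right moments to avoid the loop.
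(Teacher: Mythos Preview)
The paper does not contain a proof of this theorem: it is quoted verbatim from \cite[Theorem~3.5]{fibrantMS} and used as a black box. There is therefore no ``paper's own proof'' to compare your proposal against.

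That said, your outline is a reasonable sketch of how the cited result is established. The strategy of taking $F\IM$ and $F\K$ as generating sets, defining weak equivalences via a functorial $F\K$-cellular fibrant replacement $R$, and then reducing the recognition theorem to the two claims (a) and (b) is the standard architecture for this kind of fibrant-transfer result. Your identification of the roles of the two hypotheses is also correct: condition~(2) supplies the path object needed for the Ken Brown--style argument behind (a), and condition~(1) is precisely what lets you push a $U$-trivial-fibration forward to a square of fibrant objects where (b) can be read off. You are also right that the potential circularity between (a) and (b) is the genuine technical crux; in practice one first proves that $F\K$-cellular maps \emph{between objects with fibrant $U$-image} are sent to weak equivalences by $U$ (using only the path-object hypothesis and lifting), and then bootstraps. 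If you want to check the details, consult the original reference; this paper does not reproduce them.
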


\begin{theorem}
The stable model structure on $\mathrm{Sp}^\Sigma$ can be fibrantly transferred through the free-forgetful adjunction
\begin{tz}
\node[](1) {$\mathrm{Sp}^\Sigma$}; 
\node[right of=1,xshift=1cm](2) {$\seqsp.$}; 

\draw[->] ($(1.east)-(0,5pt)$) to node[below,la]{$U$} ($(2.west)-(0,5pt)$);
\draw[->] ($(2.west)+(0,5pt)$) to node[above,la]{$F$} ($(1.east)+(0,5pt)$);

\node[la] at ($(1.east)!0.5!(2.west)$) {$\bot$};
\end{tz}
\end{theorem}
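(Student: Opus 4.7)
My plan is to apply \cref{thm:transfer} to the free-forgetful adjunction $F \dashv U$, taking $\cM = \seqsp$ with its stable model structure (whose generating trivial cofibrations I denote $\K$), and verifying the two hypotheses in turn.

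For condition (2), given $X \in \mathrm{Sp}^\Sigma$ with $UX$ fibrant in $\seqsp$, I construct $\Path X$ as the levelwise simplicial cotensor $X^{\Delta^1_+}$, with the natural $\Sigma_n$-action on each level. Since $U$ preserves limits, $U\Path X = \Path UX$ is the standard path object for the fibrant sequential spectrum $UX$, yielding the required factorization of the diagonal into a weak equivalence followed by a fibration.

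For condition (1), given $f\colon X \to Y$ with $Uf \in \fib \cap \W$, I would apply the small object argument with $F\K$ twice: first factor $Y \to *$ as $\iota_Y\colon Y \to Y'$ in $\cof(F\K)$ followed by a map with the right lifting property against $F\K$, then factor $\iota_Y \circ f\colon X \to Y'$ as $\iota_X\colon X \to X'$ in $\cof(F\K)$ followed by $f'\colon X' \to Y'$ with the right lifting property against $F\K$. By the adjunction, these right lifting properties translate under $U$ to the right lifting property against $\K$ in $\seqsp$, so $UY'$ and $UX'$ are fibrant and $Uf'$ is a stable fibration.

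The main obstacle is to show $Uf' \in \W$. Since $Uf'$ is a stable fibration between fibrant sequential spectra, it suffices to prove it is a $\pi_*$-iso. Here I would invoke \cite[Corollary 4.3.4]{HSS}, which (independently of the stable model structure on $\symsp$) characterizes $\pi_*$-isos between such fibrant spectra via weak equivalences on mapping spaces into injective $\Omega$-spectra. For any injective $\Omega$-spectrum $E$, the adjunction $\Map_{\mathrm{Sp}^\Sigma}(Fk, E) \cong \Map_{\seqsp}(k, UE)$ and the fibrancy of $UE$ together show that $Fk$ induces a weak equivalence on $\Map(-, E)$ for $k \in \K$. Since $E$ is injective, this property passes through pushouts along cofibrations and transfinite compositions, extending to all of $\cof(F\K)$, so both $\iota_X$ and $\iota_Y$ have it. Combining with $f$ being a level equivalence and applying 2-out-of-3 in the commuting square then yields that $f'$ induces weak equivalences on $\Map(-, E)$ for every injective $\Omega$-spectrum $E$, and the corollary concludes $Uf' \in \W$.
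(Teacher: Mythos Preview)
Your treatment of condition (2) is essentially identical to the paper's.

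For condition (1), your construction of the square via two successive $(\cof(F\K),U^{-1}(\fib))$-factorizations is fine, but the verification that $Uf'\in\W$ diverges sharply from the paper and runs into trouble. You appeal to \cite[Corollary~4.3.4]{HSS}, asserting it is ``independent of the stable model structure on $\symsp$.'' In \cite{HSS} that corollary sits in Section~4 and rests on the comparison theorems established \emph{after} the stable model structure (Theorem~3.4.4); the direction you need---that a stable equivalence between levelwise-Kan $\Omega$-spectra is a $\pi_*$-isomorphism---is exactly the nontrivial direction, and making it stand on its own would require injective fibrant replacements and a Yoneda-style argument with injective $\Omega$-spectra. That is precisely the machinery this paper is designed to bypass (cf.\ the remark that the injectivity notion is ``an intentional omission''), so even if you can make it non-circular, you have reimported the hard part of \cite{HSS} rather than avoided it. Also, your justification that the property ``induces a weak equivalence on $\Map(-,E)$'' passes along pushouts and transfinite compositions \emph{because $E$ is injective} is misattributed: what actually makes this work is that $(Fk)^*\cong k^*\colon\Map(-,UE)$ is already a trivial \emph{fibration} of simplicial sets (since $UE$ is fibrant in the simplicial model category $\seqsp$), and trivial fibrations are stable under pullback and cofiltered limits---injectivity of $E$ plays no role there.

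The paper's argument for condition (1) is both simpler and self-contained. Instead of two factorizations, it factors only $X\to 1$ to get $\iota_X\colon X\to X'$, then \emph{pushes out} $\iota_X$ along $f$. Because maps in $\cof(F\K)$ are levelwise monomorphisms and $Uf$ is a level trivial fibration, the pushout map $g\colon X'\to X'\cup_X Y$ is, level by level, a pushout of a weak equivalence along a cofibration in $\mathrm{sSet}$, hence a level weak equivalence by left properness. A further fibrant replacement in the level model structure $\mathrm{Sp}^\Sigma_l$ makes the target levelwise Kan while keeping the bottom map a level equivalence. Thus $Uf'$ is a level equivalence, hence a $\pi_*$-isomorphism, with no mention of injective spectra, stable equivalences, or mapping-space criteria.
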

\begin{proof}
The categories of sequential and symmetric spectra are locally presentable, as we defined them from the locally presentable category of pointed simplicial sets. Moreover, the stable model structure $\seqsp$ is cofibrantly generated; let $\K$ denote the set of generating trivial cofibrations. We verify that conditions (1) and (2) of \cref{thm:transfer} hold. 

For condition (1), let $f\colon X\to Y$ be a map of symmetric spectra such that $Uf$ is a level trivial fibration. If $\fib$ denotes the class of fibrations in $\seqsp$, then the adjunction $F\dashv U$ gives a weak factorization system $$(\cof(F\K), U^{-1}(\fib)).$$ We can then consider a factorization of the map to the terminal spectrum
$$X\xrightarrow{\iota_X} X'\xrightarrow{p} 1$$ with $\iota_X\in\cof(F\K)$ and $Up\in\fib$; in particular, $UX'$ is fibrant. 

Taking the pushout of $\iota_X$ along $f$
\begin{tz}
\node[](1) {$X$}; 
\node[right of=1](2) {$Y$}; 
\node[below of=1](1') {$X'$}; 
\node[below of=2](2') {$X'\cup_X Y$}; 

\draw[->] (1) to node[above,la]{$f$} (2); 
\draw[->] (1) to node[left,la]{$\iota_X$} (1'); 
\draw[->] (2) to node[right,la]{$j$} (2'); 
\draw[->] (1') to node[below,la]{$g$} (2');
\end{tz}
produces a map $j\in\cof(F\K)$ (as the left class of a factorization system is always closed under pushouts). Note that maps in $\cof(F\K)$ are levelwise monomorphisms -- it follows that the map $g$ is at each spectrum level a pushout of a weak equivalence along a cofibration of simplicial sets, and is therefore a weak equivalence. Since $X'$ is an $\Omega$-spectrum, then so is $X'\cup_X Y$ as these are preserved by level equivalences. However, $X'\cup_X Y$ may not be levelwise Kan.

To remedy this, we post-compose the diagram above with a fibrant replacement 
\[\begin{tikzcd}
i\colon X'\cup_X Y\rar[hookrightarrow,"\sim"] & \overline{X'\cup_X Y}
\end{tikzcd}\] in the level model structure on $\mathrm{Sp}^\Sigma$. Letting $\J \subseteq \K$ denote the set of generating trivial cofibrations in $\mathrm{Sp}_l^\mathbb{N}$ and $F\J$ the corresponding generating trivial cofibrations in $\mathrm{Sp}_l^\Sigma$, we have $F\J \subseteq F\K$, so the map $i\in \cof(F\J)$ above is in fact also in $\cof(F\K)$; hence, so is the composite $ij$. The spectrum $\overline{X'\cup_X Y}$, which is levelwise Kan by construction, is also an $\Omega$-spectrum as $i$ is a level weak equivalence. Finally, the map $U(ig)$ is a level weak equivalence, and therefore a $\pi_*$-isomorphism. This proves condition (1).

To check condition (2), let $X$ be a symmetric spectrum such that $UX$ is fibrant. The cylinder object in $\mathrm{sSet}$
\[ \Delta[0]\sqcup \Delta[0]\hookrightarrow \Delta[1]\xrightarrow{\sim} \Delta[0] \]
yields the required path object in $\symsp$
\[ X\cong X^{\Delta[0]}\xrightarrow{\sim} X^{\Delta[1]} \twoheadrightarrow  X^{\Delta[0]\sqcup \Delta[0]}\cong X\times X.\]
To see this we observe that the cotensor $(-)^A$ commutes with the forgetful functor $U$, and that in $\seqsp$, the above two maps are a weak equivalence and a fibration, respectively, because the model structure on $\seqsp$ is simplicial \cite[Theorem 2.3]{BF}.

By \cref{thm:transfer}, this produces a model structure on $\mathrm{Sp}^\Sigma$ in which a spectrum $X$ is fibrant if and only if $UX$ is fibrant in $\seqsp$ (that is, $X$ is an $\Omega$-spectrum and levelwise Kan), and a map $f$ is a trivial fibration if and only if $Uf$ is a trivial fibration in $\seqsp$ (that is, $f$ is a level trivial fibration). Since fibrant objects and trivial fibrations uniquely determine the model structure (see \cite[Theorem 51.10]{Joyal}), we recover the stable model structure $\symsp$ of \cref{thm:stableMSsymsp} as desired. Moreover, \cref{thm:transfer} yields a description of the weak equivalences between fibrant objects: these are precisely the $\pi_*$-isomorphisms.
\end{proof}

\bibliographystyle{alpha}
\bibliography{references}

\end{document}